\newcommand{\be}{\begin{eqnarray}}
\newcommand{\ee}{\end{eqnarray}}
\newcommand{\bez}{\begin{eqnarray*}}
\newcommand{\eez}{\end{eqnarray*}}
\def\NN{{\mathbb N}}
\def\epi{\twoheadrightarrow}
\def\mono{\rightarrowtail}
\def\arbreA{\vcenter{\xymatrix@R=3pt@C=3pt{
&& \\
&*{}\ar@{-}[ur] \ar@{-}[ul] \ar@{-}[d]     &\\
&&
}}}
\def\petitarbreA{\vcenter{\xymatrix@R=1pt@C=1pt{
&& \\
&*{}\ar@{-}[ur] \ar@{-}[ul] \ar@{-}[d]     &\\
&&
}}}
\def\petitarbreB{\vcenter{\xymatrix@R=.4ex@C=.4ex{
&& \\
&*{}\ar@{-}[ur] \ar@{-}[ul] \ar@{-}[d]     &\\
&&
}}}
\def\arbreBA{\vcenter{\xymatrix@R=2pt@C=2pt{
&&&&\\
&&&*{}\ar@{-}[ul] & \\
&&*{}\ar@{-}[uurr] \ar@{-}[uull] \ar@{-}[d]     &&\\
&&&&
}}}
\def\arbreAB{\vcenter{\xymatrix@R=2pt@C=2pt{
&&&&\\
&*{}\ar@{-}[ur] &&& \\
&&*{}\ar@{-}[uurr] \ar@{-}[uull] \ar@{-}[d]     &&\\
&&&&
}}}
\def\arbreBB{\vcenter{\xymatrix@R=2pt@C=2pt{
&&*{}&&\\
&&&& \\
&&*{}\ar@{-}[uurr] \ar@{-}[uull] \ar@{-}[d] \ar@{-}[uu]     &&\\
&&&&
}}}
\def\arbreABC{\vcenter{\xymatrix@R=1pt@C=1pt{
&&&&&&\\
&*{}\ar@{-}[ur] &&&&& \\
&&*{}\ar@{-}[uurr] &&&&\\
&&&*{}\ar@{-}[uuurrr] \ar@{-}[uuulll] \ar@{-}[d] &&&\\
&&&&&&
}}}
\def\arbreBAC{\vcenter{\xymatrix@R=1pt@C=1pt{
&&&&&&\\
&&&*{}\ar@{-}[ul] &&& \\
&&*{}\ar@{-}[uurr] &&&&\\
&&&*{}\ar@{-}[uuurrr] \ar@{-}[uuulll] \ar@{-}[d] &&&\\
&&&&&&
}}}
\def\arbreACA{\vcenter{\xymatrix@R=1pt@C=1pt{
&&&&&&\\
&*{}\ar@{-}[ur] &&&&*{}\ar@{-}[ul] & \\
&&&&&&\\
&&&*{}\ar@{-}[uuurrr] \ar@{-}[uuulll] \ar@{-}[d] &&&\\
&&&&&&
}}}
\def\arbreCAB{\vcenter{\xymatrix@R=1pt@C=1pt{
&&&&&&\\
&&&*{}\ar@{-}[ur] &&& \\
&&&&*{}\ar@{-}[uull] &&\\
&&&*{}\ar@{-}[uuurrr] \ar@{-}[uuulll] \ar@{-}[d] &&&\\
&&&&&&
}}}
\def\arbreCBA{\vcenter{\xymatrix@R=1pt@C=1pt{
&&&&&&\\
&&&&&*{}\ar@{-}[ul] & \\
&&&&*{}\ar@{-}[uull] &&\\
&&&*{}\ar@{-}[uuurrr] \ar@{-}[uuulll] \ar@{-}[d] &&&\\
&&&&&&
}}}
\def\arbreABCD{\vcenter{\xymatrix@R=1pt@C=1pt{
&&&&&&&&\\
&*{}\ar@{-}[ur] &&&&&&& \\
&&*{}\ar@{-}[uurr] &&&&&&\\
&&&*{}\ar@{-}[uuurrr] &&&&&\\
&&&&*{}\ar@{-}[uuuurrrr] \ar@{-}[uuuullll] \ar@{-}[d] &&&&\\
&&&&&&
}}}
\def\arbreACAD{\vcenter{\xymatrix@R=1pt@C=1pt{
&&&&&&&&\\
&*{}\ar@{-}[ur] &&&&*{}\ar@{-}[ul] &&& \\
&&&&&&&&\\
&&&*{}\ar@{-}[uuurrr] &&&&&\\
&&&&*{}\ar@{-}[uuuurrrr] \ar@{-}[uuuullll] \ar@{-}[d] &&&&\\
&&&&&&
}}}
\def\arbreBADA{\vcenter{\xymatrix@R=1pt@C=1pt{
&&&&&&&&\\
&&&*{}\ar@{-}[ul] &&&&*{}\ar@{-}[ul] & \\
&&*{}\ar@{-}[uurr] &&&&&&\\
&&& &&&&&\\
&&&&*{}\ar@{-}[uuuurrrr] \ar@{-}[uuuullll] \ar@{-}[d] &&&&\\
&&&&&&
}}}
\def\Kzero{\xymatrix@R=4pt@C=4pt{
\\
\\
\\
{\bullet}\\
}}
\def\KzeroC{\xymatrix@R=4pt@C=4pt{
\\
\\
(1)\\
{\bullet}\\
}}
\def\KunA{\xymatrix@R=4pt@C=4pt{
&&\\
&&\\
&&\\
*{}\ar@{-}[rr]&&*{}\\
}}
\def\KunC{\xymatrix@R=4pt@C=4pt{
&&\\
&&\\
(1,2)&&(2,1)\\
*{}\ar@{-}[rr]&&*{}\\
}}
\def\TdeuxA{\xymatrix@R=4pt@C=4pt{
&&*{}\ar@{-}[dddll]\ar@{-}[dddrr]&&\\
&&&&\\
&&&&\\
*{}\ar@{-}[rrrr]&&&&*{}\\
}}
\def\GdeuxA{\xymatrix@R=4pt@C=4pt{
&&&&\\
&&&&\\
*{}\ar@{-}@/^1pc/[rrrr]\ar@{-}@/_1pc/[rrrr]&&&&*{}\\
&&&&\\
&&&&\\
}}
\def\TtroisA{\xymatrix@R=4pt@C=4pt{
&&&*{}\ar@{-}[ddddlll]\ar@{-}[ddddrrr]\ar@{-}[drrr]&&&\\
&&&&&&*{}\ar@{-}[ddd]\ar@{.}[dddllllll]\\
&&&&&*{}&\\
&&&&&&\\
*{}\ar@{-}[rrrrrr]&&&&&&*{}\\
}}
\def\GtroisA{\xymatrix@R=4pt@C=4pt{
&&&*{}\ar@{-}@/^0.5pc/[ddrrr]&&&\\
&&&&&&\\
*{}\ar@{.}@/^1pc/[rrrrrr]\ar@{-}@/_1pc/[rrrrrr]\ar@{-}@/^0.5pc/[uurrr]\ar@{-}@/_0.5pc/[ddrrr]&&&&&&*{}\\
&&&&&&\\
&&&*{}\ar@{-}@/_0.5pc/[uurrr]&&&\\
}}
\def\TquatreA{\xymatrix@R=4pt@C=4pt{
&&&*{}\ar@{-}[ddddlll]\ar@{-}[ddddrrr]\ar@{-}[drrr]&&&\\
&&&&&&*{}\ar@{-}[ddd]\ar@{.}[dddllllll]\\
&&&&&*{}&\\
&&&&*{}\ar@{.}[dllll]\ar@{.}[uuul]\ar@{.}[uurr]\ar@{.}[drr]&&\\
*{}\ar@{-}[rrrrrr]&&&&&&*{}\\
}}
\def\QdeuxA{\xymatrix@R=3pt@C=3pt{
*{}\ar@{-}[rrrr]\ar@{-}[dddd]&&&&*{}\ar@{-}[dddd]\\
&&&&\\
&&&&\\
&&&&\\
*{}\ar@{-}[rrrr]&&&&*{}\\
}}
\def\QtroisA{\xymatrix@R=5pt@C=5pt{
&&*{}\ar@{-}[rrr]\ar@{.}[ddd]&&&*{}\ar@{-}[ddd]\\
*{}\ar@{-}[rrr]\ar@{-}[ddd]\ar@{-}[urr]&&&*{}\ar@{-}[ddd]\ar@{-}[urr]&&\\
&&&&&\\
&&*{}\ar@{.}[rrr]&&&*{}\\
*{}\ar@{-}[rrr]\ar@{.}[urr]&&&*{}\ar@{-}[urr]&&\\
}}
\def\KdeuxA{\xymatrix@R=6pt@C=6pt{
&&&&&\\
&&*{}\ar@{-}[dll]\ar@{-}[ddrrr]  &&& \\
*{}\ar@{-}[dd] &&&&&\\
&&&&&*{}\ar@{-}[ddlll]  \\
*{}\ar@{-}[drr] &&&&& \\
&&*{}&&& \\
}}
\def\KdeuxC{\xymatrix@R=4pt@C=4pt{
&&&(1,2,3)&&&\\
&&&*{}\ar@{-}[dll]\ar@{-}[ddrrr]  &&&& \\
(2,1,3)&*{}\ar@{-}[dd] &&&&&&\\
&&&&&&*{}\ar@{-}[ddlll]&(1,4,1)  \\
(3,1,2))&*{}\ar@{-}[drr] &&&&&& \\
&&&*{}&&&& \\
&&&(3,2,1)&&&&
}}
\def\PdeuxC{\xymatrix@R=2pt@C=2pt{
&&&(1,2,3)&&\\
&&&*{}\ar@{-}[ddll]\ar@{-}[ddrr]  &&& \\
&&&&&&\\
(2,1,3)&*{}\ar@{-}[dd] &&&&*{}\ar@{-}[dd] &(1,3,2)\\
&&&&&  \\
(3,1,2)&*{}\ar@{-}[ddrr] &&&&*{}\ar@{-}[ddll]&(2,3,1) \\
&&&&&&\\
&&&*{}&&& \\
&&&(3,2,1)&&&
}}
\def\PdeuxA{\xymatrix@R=4pt@C=4pt{
&&&&&\\
&&*{}\ar@{-}[dll]\ar@{-}[drr]  &&& \\
*{}\ar@{-}[dd] &&&&*{}\ar@{-}[dd] &\\
&&&&& \\
*{}\ar@{-}[drr] &&&&*{}\ar@{-}[dll] & \\
&&*{}&&& \\
}}
\def\KtroisA{\xymatrix@R=2pt@C=2pt{
&*{}\ar@{-}[rrrrrrrrrr] *{}\ar@{-}[rrrrddd] *{}\ar@{-}[ldd] &&&& &&& &&&*{}\ar@{.}[llldddddd]  *{}\ar@{-}[rrd] && \\
&&&& &&& &&& &&&*{}\ar@{-}[llldd] *{}\ar@{-}[llldddddd]   \\
*{}\ar@{-}[rrrrddd]  *{}\ar@{-}[rrdddd] &&&&& &&& &&& && \\
&&&&&*{}\ar@{-}[rrrrr] *{}\ar@{-}[ldd]  &&& &&*{}\ar@{-}[lldddd] & && \\
&&&&& &&& &&& && \\
&&&&*{}\ar@{-}[rdd] & &&& &&& & &\\
&&*{}\ar@{.}[rrrrrr] *{}\ar@{-}[rrrddd] &&& &&&*{}\ar@{.}[rrd]  &&& && \\
&&&&&*{}\ar@{-}[rrr] *{}\ar@{-}[dd]  &&&*{}\ar@{-}[dd]  &&*{}\ar@{-}[lldd] & & &\\
&&&&& &&& &&& && \\
&&&&&*{}\ar@{-}[rrr]  &&&*{} &&& && \\
}}
\def\KtroisF{\xymatrix@R=0pt@C=0pt{
&*{}\ar[rrrrrrrrrr] *{}\ar@{<-}[rrrrddd] *{}\ar[ldd] &&&& &&& &&&*{}\ar@{.>}[llldddddd]  *{}\ar@{<-}[rrd] && \\
&&&& &&& &&& &&&*{}\ar@{<-}[llldd] *{}\ar[llldddddd]   \\
*{}\ar@{<-}[rrrrddd]  *{}\ar[rrdddd] &&&&& &&& &&& && \\
&&&&&*{}\ar[rrrrr] *{}\ar[ldd]  &&& &&*{}\ar[lldddd] & && \\
&&&&& &&& &&& && \\
&&&&*{}\ar[rdd] & &&& &&& & &\\
&&*{}\ar@{.>}[rrrrrr] *{}\ar@{<-}[rrrddd] &&& &&&*{}\ar@{<.}[rrd]  &&& && \\
&&&&&*{}\ar[rrr] *{}\ar[dd]  &&&*{}\ar[dd]  &&*{}\ar@{<-}[lldd] & & &\\
&&&&& &&& &&& && \\
&&&&&*{}\ar[rrr]  &&&*{} &&& && \\
}}
\def\KtroisFOld{\xymatrix@R=3pt@C=3pt{
&*{}\ar[rrrrrrrrrr] *{}\ar@{<-}[rrrrddd] *{}\ar[ldd] &&&& &&& &&&*{}\ar@{.>}[llldddddd]  *{}\ar@{<-}[rd] & \\
&&&& &&& &&& &&*{}\ar@{<-}[lldd] *{}\ar[llldddddd]   \\
*{}\ar[rrrrddd]  *{}\ar@{-}[rrdddd] &&&&& &&& &&& & \\
&&&&&*{}\ar[rrrrr] *{}\ar[ldd]  &&& &&*{}\ar@{<-}[lldddd] & & \\
&&&&& &&& &&& & \\
&&&&*{}\ar[rdd] & &&& &&& & \\
&&*{}\ar@{.>}[rrrrrr] *{}\ar@{<-}[rrrddd] &&& &&&*{}\ar@{<.}[rd]  &&& & \\
&&&&&*{}\ar[rrr] *{}\ar[dd]  &&&*{}\ar[dd]  &*{}\ar@{<-}[ldd] && & \\
&&&&& &&& &&& & \\
&&&&&*{}\ar[rrr]  &&&*{} &&& & \\
}}
\def\PtroisA{\xymatrix@R=4pt@C=4pt{
&&&*{}\ar@{-}[dll]\ar@{.}[dr]\ar@{-}[r] &*{}\ar@{-}[drr]\ar@{-}[dll] &&& \\
&*{}\ar@{-}[ddl]\ar@{-}[r]&*{}\ar@{-}[dd] & &*{}\ar@{.}[ddl]\ar@{.}[drr]&&*{}\ar@{-}[dd]\ar@{-}[dr]& \\
&&& & &&*{}\ar@{.}[r]\ar@{.}[ddl]&*{}\ar@{-}[dd] \\
*{}\ar@{-}[dd]\ar@{.}[dr]&&*{}\ar@{-}[ddl]\ar@{-}[drr]&*{}\ar@{.}[dll]\ar@{.}[drr] & &&*{}\ar@{-}[dll]\ar@{-}[dr]& \\
&*{}\ar@{.}[dd]&& &*{}\ar@{-}[ddl] &*{}\ar@{.}[dd]&&*{}\ar@{-}[ddl] \\
*{}\ar@{-}[dr]\ar@{-}[r]&*{}\ar@{-}[drr]&& & &&& \\
&*{}\ar@{-}[drr]&&*{}\ar@{-}[dr] & &*{}\ar@{.}[dll]\ar@{.}[r]&*{}\ar@{-}[dll]& \\
&&&*{}\ar@{-}[r] &*{} &&& \\
}}
\def\TamariDeux{\xymatrix@R=15pt@C=10pt{
300&&210&&120\\
*{\times} \ar@{-}[rrrr]\ar@{-}[rdd]&&*{\times} && *{\times}\ar@{-}[ldd]\\
&&&&\\
&*{\times} \ar@{-}[rr]&&*{\times}&&\\
& 201 && 111 &
}}
\def\TamariTrois{\xymatrix@R=15pt@C=10pt{
&*{}\ar@{-}[rrrrrrrr]\ar@{-}[ddddrr]\ar@{-}[drr]&&&*{}\ar@{.}[rdd]&&   &*{}\ar@{-}[dll]&&*{}\ar@{-}[ddddll]*{}\ar@{-}[lld] \\
&&&*{}\ar@{-}[rrrr]\ar@{-}[ddr]&&*{}   &&*{}\ar@{-}[ddl]&&  \\
&&*{}\ar@{.}[rrrrrr]&&&*{}   &&&&  \\
&&&&*{}\ar@{-}[rr]*{}\ar@{-}[dl]&   &*{}\ar@{-}[rd]&&&  \\
&&&*{}\ar@{-}[rrrr]&&   &&*{}&&  
}}
\def\KtroisFgrand{\xymatrix@R=4pt@C=4pt{
&*{}\ar[rrrrrrrrrr] *{}\ar@{<-}[rrrrddd] *{}\ar[ldd] &&&& &&& &&&*{}\ar@{.>}[llldddddd]  *{}\ar@{<-}[rrd] && \\
&&&& &&& &&& &&&*{}\ar@{<-}[llldd] *{}\ar[llldddddd]   \\
*{}\ar@{<-}[rrrrddd]  *{}\ar[rrdddd] &&&&& &&& &&& && \\
&&&&&*{}\ar[rrrrr] *{}\ar[ldd]  &&& &&*{}\ar[lldddd] & && \\
&&&&& &&& &&& && \\
&&&&*{}\ar[rdd] & &&& &&& & &\\
&&*{}\ar@{.>}[rrrrrr] *{}\ar@{<-}[rrrddd] &&& &&&*{}\ar@{<.}[rrd]  &&& && \\
&&&&&*{}\ar[rrr] *{}\ar[dd]  &&&*{}\ar[dd]  &&*{}\ar@{<-}[lldd] & & &\\
&&&&& &&& &&& && \\
&&&&&*{}\ar[rrr]  &&&*{} &&& && \\
}}
\def\t{\otimes}
\def\d{\succ}
\def\g{\prec}
\def\l{\dashv}
\def\r{\vdash}
\def\aa{\alpha}
\def\PP{{\mathcal P}}
\def\RRR{{\mathbb R}}
\def\KKK{{\mathcal K}}
\def\Ai{A_{\infty}}
\begin{document}

\setcounter{chapter}{3}

\title*{Dichotomy of the addition of natural numbers}
% Use \titlerunning{Short Title} for an abbreviated version of
% your contribution title if the original one is too long
\author{Jean-Louis Loday}
% Use \authorrunning{Short Title} for an abbreviated version of
% your contribution title if the original one is too long
\institute{Jean-Louis Loday \at Institut de Recherche Math\'{e}matique Avanc\'{e}e,
CNRS et Universit\'{e} de Strasbourg,
7 rue Ren\'{e}-Descartes,
67084 Strasbourg Cedex, France,  
\email{loday@math.unistra.fr} }
%
% Use the package "url.sty" to avoid
% problems with special characters
% used in your e-mail or web address
%
\maketitle

\abstract*{}

\abstract{This is an elementary presentation of the arithmetic of trees. We show how it is related to the Tamari poset. In the last part we investigate various ways of realizing this poset as a polytope (associahedron), including one inferred from Tamari's thesis. }

\section*{Introduction}

In this paper the addition of integers is split into two operations which satisfy some relations. These relations are taken so that they split the associativity relation of addition into three. Under these new operations, the unit $1$ generates elements which are in bijection with the planar binary rooted trees. More precisely, any integer $n$ splits as the disjoint union of the trees with $n$ internal vertices. The Tamari poset is a partial order structure on this set of trees. We show how the addition on trees is related to the Tamari poset structure. This first part is an elementary presentation of results contained in ``Arithmetree'' \cite{Loday02} by the author and in \cite{LodayRonco02} written jointly with M.\ Ronco. 

Prompted by an unpublished page of Tamari's thesis, we investigate various ways of realizing the Tamari poset as a polytope. In particular we show that Tamari's way of indexing the planar binary rooted trees gives rise to a hypercube-like polytope on which the associahedron is drawn.

\section{About the formula $1+1=2$}

The equality $3+5=8$ can be seen either as $3$ acting on the left on $5$, or as $5$ acting on the right on $3$. Since adding $3$ and $5$ is both, one can imagine to  ``split'' this sum into two pieces reflecting this dichotomy. Physically, splitting the addition symbol $+$ into two pieces gives:
\begin{gather*}
+\\
\l \r\\
\l\quad \r
\end{gather*}
that is, the symbols  $\l$ and $\r$. Hence, since $1+1=2$, one defines  two new elements 
$1\l 1$ and $1\r 1$ so that  
$$1\l 1\  \cup\  1\r 1= 1+1=2.$$ 

\section{Splitting the integers into pieces}

How to go on  ? A priori one can form eight  elements out of three copies of $1$ and of the operations \emph{left} $\l$ and \emph{right} $\r$, that is 
\begin{gather*}
 (1 \l 1) \l 1 \quad , \quad   (1 \l 1) \r 1 \quad , \quad  (1 \r 1) \l 1 \quad , \quad  (1 \r 1) \r 1 \quad , \quad \\
1 \l (1 \l 1) \quad , \quad   1 \l (1 \r 1) \quad , \quad  1 \r (1 \l 1) \quad , \quad  1 \r (1 \r 1) \quad . \quad 
  \end{gather*}
  
But we would like to keep associativity  of the operation $+$, so we want that the union of the elements on the first row is equal to the union of the elements of the second row. More generally for any component $r$ and $s$ we split the sum as 
$$r+s = r\l s \ \cup \ r \r s \ .$$
Taking again our metaphore of left action and right action, it is natural to choose the relations
\begin{align*}
&(*)\qquad\begin{cases}
(r \l s) \l t = r \l (s +t), \\
(r \r s) \l t = r \r (s \l t), \\
(r +s) \r t = r \r (s \r t), \\
\end{cases}
\end{align*}

\noindent since, by taking the union, we get readily $(r+s)+t = r+(s+t).$ 
The first relation says that  ``acting on the right by $s$  and then by  $t$'' is the same as ``acting by  $s+t$''. (The kowledgeable reader will remark the analogy with bimodules). Since we have three relations, our eight elements in the case  $r=s=t=1$ go down to  five, which are the following:
$$(1\r 1)\r 1\ , \ (1\l 1)\r 1\ , \ 1\r 1 \l 1 \ , \ 1\l (1\r 1)\ , \ (1\l1)\l 1 \ .$$

Indeed, since one has $(1 \r 1) \l 1= 1 \r (1 \l 1)$,  the parentheses can be discarded. On the other hand the two elements $1\r (1\r 1)$ and $(1\l 1)\l 1$  can be written respectively:
\begin{eqnarray*}
1\r (1\r 1) &=& (1\r 1)\r 1\ \cup \ (1\l 1)\r 1\  ,\\
 \  (1\l 1)\l 1&=& 1\l (1\r 1) \ \cup \ 1\l (1\l 1).
\end{eqnarray*}

In conclusion, we have decomposed the integer $2$ into two components $1\l 1$ and $1\r 1$ and the integer $3$ into five components (see above), the integer $1$ has only one component, namely itself. How about the integer $n$ ? In fact, not only would we like to decompose  $n$ into the union of some components, but we would also like to know how to add these components. The test will consist in checking that adding the components of $n$ with all the components of $m$, we get back the union of all the components of $m+n$.

\section{Trees and addition on trees}\label{trees} In order to understand the solution we introduce the notion of \emph{planar binary rooted tree}, that we simply call tree in the sequel. Here are the first of them:

\begin{displaymath}
PBT_1 = \{\  \vert \ \}\  , \quad PBT_2=\big\{ \arbreA\big\}\  , \quad 
PBT_3=\Big\{\arbreBA\ ,\  \arbreAB \Big\}\ ,
\end{displaymath}
\begin{displaymath}
{PBT_4=\bigg\{ \arbreABC}\ ,\  {{}\arbreBAC },\  {{}\arbreACA },\  {{}\arbreCAB },\  {{}\arbreCBA }\bigg\}\ .
\end{displaymath}

Such a tree $t$ is completely determined by its left part $t^{l}$ and its right part $t^{r}$, which are themselves trees. The tree $t$ is obtained  by joining the roots of $t^{l}$ and of $t^{r}$ to a new vertex and adding a root:
$$\xymatrix@R=10pt@C=10pt{ 
&t^{l}\ar@{-}[dr] && t^{r}\ar@{-}[dl] \\
t=&&*{}\ar@{-}[d] &\\
&&*{}&
}$$
 This construction is called the \emph{grafting} of  $t^{l}$ and $t^{r}$. One writes $t= t^{l}\vee t^{r}$. 

Hence any nontrivial tree (that is different from $\vert$) is obtained from the trivial tree $\vert$ by iterated grafting. The elements of the set $PBT_{n}$ are the trees with $n$ leaves that is with $n-1$ internal vertices. The number of elements in $PBT_{n}$ is the Catalan number $c_{n-1}$. It is known that  $c_{n}= \frac{(2n)!}{n!\, (n+1)!}= \frac{1}{n+1}\binom{2n}{n}$. 

The solution to the splitting of natural numbers is going to be a consequence of the properties of the operations  $\l$ and $\r$ on trees, which are defined as follows. 
For any nontrivial trees $s$ and $t$ one defines  recursively the two operations $\l$ and $\r$ by the formulas
$$(\ddag)\qquad  s\l t := s^{l}\vee (s^{r}+t)\quad, \quad s\r t := (s+t^{l})\vee t^{r},$$
and the sum by 
$$s+t :=s\l t \ \ \cup \ s\r t .$$
 The trivial tree is supposed to be a neutral element for the sum:  $\vert = 0$, so $s\l 0= s$ and $0\r t = t$.
The unique tree with one internal vertex (Y shape tree) represents $1$. Then one gets 
$$ \arbreA \l \arbreA = \arbreBA \quad , \quad  \arbreA \r \arbreA = \arbreAB \ .$$ 
Notice the matching between the orientation of the leaves and the involved operations: the middle leaf of the tree $\arbreBA$, resp.\ $\arbreAB$,  is oriented to the left, resp.\ right,  and this tree represents the element $1\l 1$, resp. $1\r 1$.

The principal properties of these two operations are given by the following statement.

\begin{proposition} \cite{LodayRonco02} The operations $\l$ and $\r$ satisfy the relations $(*)$. Any tree can be obtained from the initial tree $\arbreA$ by iterated application of the operations left and right.
\end{proposition}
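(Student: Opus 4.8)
The plan is to treat the two assertions separately: the relations $(*)$ by an induction that couples them with the associativity of $+$, and the generation statement by writing grafting as a composite of $\l$ and $\r$. All three operations $\vee$, $\l$, $\r$ distribute over the union $\cup$, and I will use this throughout without further comment. I would induct on the total number of internal vertices $n$ of the three arguments, establishing at each stage the three identities of $(*)$ \emph{together with} the associativity $(a+b)+c=a+(b+c)$. The middle relation is the easy one and needs no induction: unfolding $(\ddag)$ and distributing, both $(r\r s)\l t$ and $r\r(s\l t)$ evaluate to $(r+s^{l})\vee(s^{r}+t)$. The third relation reduces to associativity one degree lower, since $(\ddag)$ gives $(r+s)\r t=((r+s)+t^{l})\vee t^{r}$ and $r\r(s\r t)=(r+(s+t^{l}))\vee t^{r}$, so the two sides agree as soon as $(r+s)+t^{l}=r+(s+t^{l})$, an instance in which $t$ is replaced by the strictly smaller $t^{l}$. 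Symmetrically, the first relation reduces to $(r^{r}+s)+t=r^{r}+(s+t)$, with $r^{r}$ strictly smaller than $r$; both are supplied by the induction hypothesis.

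To close the loop I would deduce associativity at degree $n$ from the three relations at degree $n$. Expanding $(r+s)+t=(r+s)\l t\ \cup\ (r+s)\r t$ and distributing $\l$ over $r+s=r\l s\cup r\r s$, the first two relations rewrite $(r+s)\l t$ as $r\l(s+t)\cup r\r(s\l t)$, while the third rewrites $(r+s)\r t$ as $r\r(s\r t)$; collecting the two $r\r$-terms gives $r\l(s+t)\cup r\r(s\l t\cup s\r t)=r\l(s+t)\cup r\r(s+t)=r+(s+t)$. There is no circularity, because the relations invoked here were already established at degree $n$ (the middle one unconditionally, the outer two from lower-degree associativity), so associativity follows from them. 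The main subtlety --- and the real bookkeeping obstacle --- is exactly this ordering inside a single inductive stage: one must prove the relations \emph{before} the associativity of the same degree, and keep track of which instances drop in degree. The empty tree $\vert=0$ supplies the base cases through the conventions $s\l 0=s$, $0\r t=t$ (together with $0\l t=0$ and $s\r 0=0$), and one checks the relations directly in each case where an argument is trivial.

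For the second assertion I would first record the grafting identity $s\vee t=(s\r\arbreA)\l t$, valid for all trees. Since $\arbreA=\arbreA^{l}\vee\arbreA^{r}$ has $\arbreA^{l}=\arbreA^{r}=0$, the definition $(\ddag)$ gives $s\r\arbreA=(s+0)\vee 0=s\vee 0$ and then $(s\vee 0)\l t=s\vee(0+t)=s\vee t$; in particular the right-hand side is a single tree whenever $s$ and $t$ are. The claim now follows by induction on the number of internal vertices: $\arbreA$ is the unique tree with one internal vertex, and any larger tree $t=t^{l}\vee t^{r}$ equals $(t^{l}\r\arbreA)\l t^{r}$, where $t^{l}$ and $t^{r}$ have strictly fewer internal vertices and are therefore obtainable from $\arbreA$ by the induction hypothesis. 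A vanishing subtree is absorbed by the degenerate forms $0\vee t=\arbreA\l t$ and $s\vee 0=s\r\arbreA$ of the same identity, so that only genuinely nontrivial subtrees are fed back into the induction.
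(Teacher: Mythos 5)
Your proof is correct, but there is no in-paper argument to compare it against: the paper states this proposition with only a citation to \cite{LodayRonco02} and defers the verification entirely to that reference, so your reconstruction has to stand on its own --- and it does. The skeleton is the right one. All three operations distribute over $\cup$; the middle relation of $(*)$ holds identically, both sides unfolding to $(r+s^{l})\vee(s^{r}+t)$; and the outer two relations reduce to instances of associativity in which one argument is replaced by a proper subtree ($r^{r}$ or $t^{l}$), so a mutual induction on the total number of internal vertices --- the three relations first, then associativity of the same degree as their formal consequence via $(r+s)+t = (r\l s)\l t\ \cup\ (r\r s)\l t\ \cup\ (r+s)\r t$ --- closes without circularity. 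Strengthening the induction hypothesis to carry associativity alongside $(*)$ is exactly the bookkeeping the recursive definition $(\ddag)$ forces, and you have ordered the steps inside each stage correctly. The generation statement via the grafting identity $s\vee t=(s\r\arbreA)\l t$, with the degenerate forms $0\vee t=\arbreA\l t$ and $s\vee 0=s\r\arbreA$, is likewise correct: it is the natural inductive translation of the fact that every nontrivial tree is a grafting, and it does produce a single tree at each step, which is the point needing care since $\l$ and $\r$ generally return unions. One caution on your conventions: in ``$0\l t=0$ and $s\r 0=0$'' the symbol $0$ on the right-hand side must be read as the \emph{empty union} (the zero element), not as the trivial tree $\vert$; under the latter reading $0+t=0\l t\ \cup\ 0\r t$ would contain $\vert$ and neutrality would fail, whereas under the former (the standard unital dendriform convention, and clearly your intent) all of your base-case verifications go through as stated.
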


The solution is then the following. The integer $n$ is the disjoint union of the elements of $PBT_{n+1}$, that is the trees with  $n$ internal vertices. For instance:
\begin{eqnarray*}
0&=& \vert\\
1&=& \arbreA \\
2 &=& \arbreAB \cup \arbreBA\\
3&=& \arbreABC \cup \arbreBAC \cup \arbreACA \cup \arbreCAB \cup \arbreCBA \\
\end{eqnarray*}

The sum $+$ of integers can be extended to the components of these integers, that is to trees. Even better, the operations left and right can be extended to the trees. The above formulas $(\ddag)$ give the algorithm to perform the computation.

\section{Where we show that  $1+1=2$ and $2+1=3$}
Here are some computation examples:
\begin{eqnarray*}
1\l 1 &=& \arbreA \l \arbreA = \arbreBA\quad , \quad 1\r 1 = \arbreA\r \arbreA = \arbreAB, \\
2\l 1 &=& \Big(\arbreAB \cup \arbreBA \Big)\l \arbreA =   \arbreAB  \l \arbreA  \cup \arbreBA \l \arbreA \\
&=& \arbreACA \cup \arbreCAB \cup \arbreCBA,\\
2\r 1 &=& \Big(\arbreAB \cup \arbreBA \Big)\r \arbreA =   \arbreAB  \r \arbreA  \cup \arbreBA \r \arbreA \\
&=&  \arbreABC \cup \arbreBAC.\\
\end{eqnarray*}

Notice that  $1+1 = 1\l 1 \ \cup \ 1\r 1 = 2$ and that $2+1 = 2\l 1 \ \cup \ 2\r 1 = 3$, since $3$ is the union of the five trees of $PBT_{4}$. They represent the five elements which can be written with three copies of $1$ (see above).  Similarly one can check that
\begin{eqnarray*}
m+n &=& m\l n \ \cup \ m\r n\\
&=&\Big(\bigcup_{s\in PBT_{n+1}}s\ \l \ \bigcup_{t\in PBT_{m+1}}t\Big)\ \bigcup \ \Big(\bigcup_{s\in PBT_{n+1}}s\ \r \ \bigcup_{t\in PBT_{m+1}}t\Big) \\
&=&\Big(\bigcup_{s\in PBT_{n+1}, t\in PBT_{m+1}}s\ \l \ t\Big)\ \bigcup \ \Big(\bigcup_{s\in PBT_{n+1}, t\in PBT_{m+1}}s\ \r \ t\Big) \\
&= & \bigcup_{r\in PBT_{m+n+1}}r = m+n.
\end{eqnarray*}

Finally there are two ways to look at the  $c_{n+1}$ components of the integer $n$: either through trees, or through $n$ copies of $1$ and the operations $\l$ and $\r$ duly parenthesized. Recall that this second presentation is not unique.

\bigskip

There are many families of sets whose number is the Catalan number (they are called Catalan sets). For each of them one can translate the algebraic structure unraveled above. In \cite{AvalViennot10} Aval and Viennot have performed this task for the ``alternative Catalan tableaux''. It is interesting to notice that, in their description of the sum of two tableaux (see loc.\ cit.\ p.\ 6), there are two different kinds of tableaux. In fact the tableaux of one kind give the left operation and the tableaux of the other kind give the right operation.

\section{The integers as molecules}\label{molecule} Let us think of the integers as molecules and of its components as atoms. Then one would like to know of the ways the atoms are bonded in order to form the molecule. Since the molecule $2$ has only two atoms, we pretend that there is a bond between the two atoms: 

$$\arbreAB \xymatrix{\ar@{-}[rr]&& }\arbreBA$$

For our mathematical purpose it is important to see this bond as an oriented relation (it is called a covering relation):

$$\arbreAB \xymatrix{\ar[rr]&& }\arbreBA$$

For the molecule $n$ one puts a bond between two atoms (i.e.\ trees) whenever one can obtain one of them from the other one by a local change as in the molecule $2$ case. 
Here is what we get for  $n=3$:

\begin{figure}[h]
\centering
\includegraphics[scale=0.10]{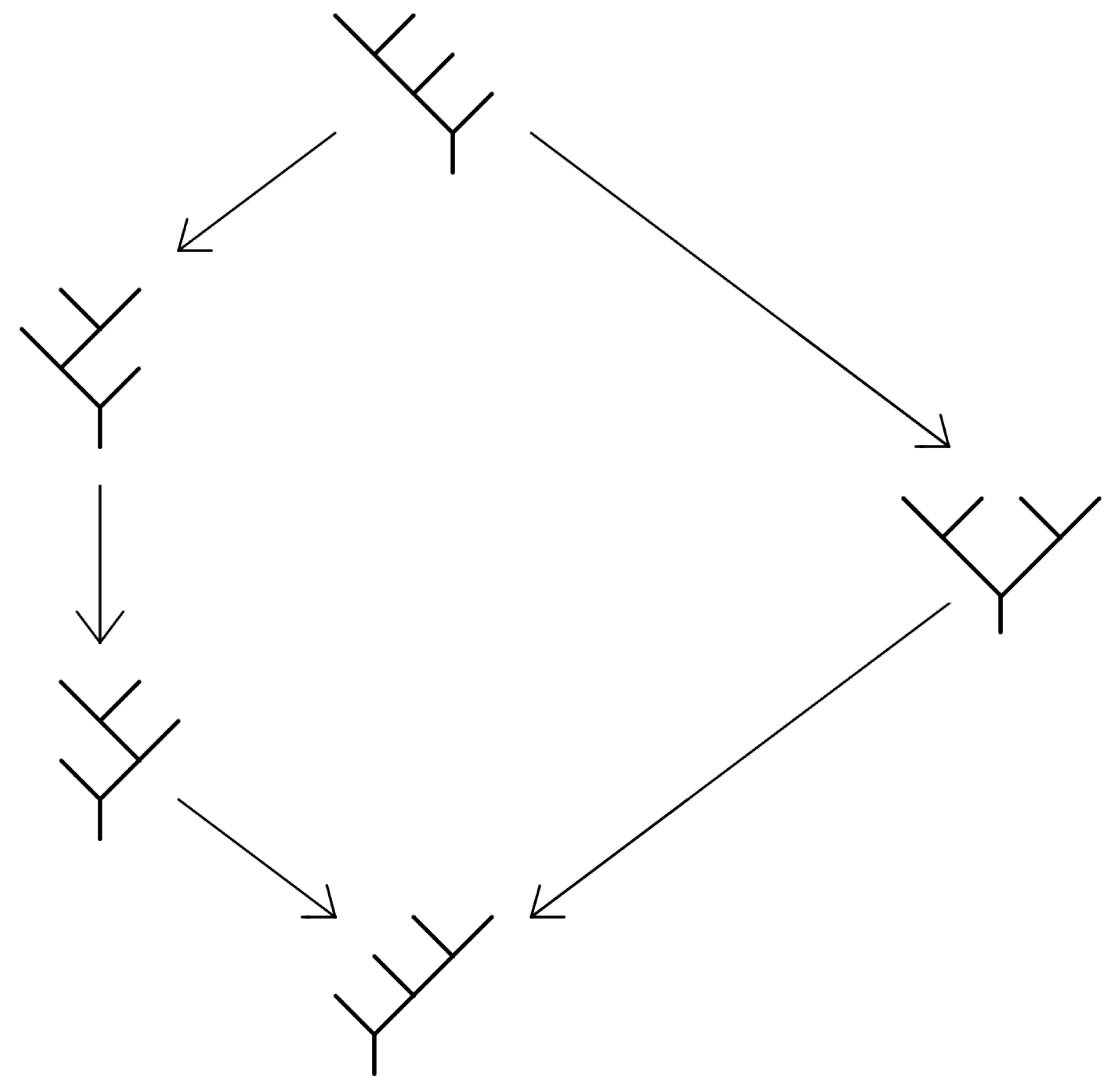}
\end{figure}
and for $n=4$ (without mentioning the trees):

$$\KtroisFgrand$$

\bigskip

These drawings already appeared, under slightly different shape,  in Dov Tamari's original thesis, defended in 1951 (see the discussion below in section \ref{realization}). In fact the covering relations on the set $PBT_{n}$ make it into a  ``partially ordered set'', usually abbreviated into ``poset''. This is the \emph{Tamari poset} on trees \cite{Tamari51}. The reason for introducing this poset at this point is its strong relationship with the algebraic structure that we just described on trees. It is given by the following statement proved in a joint work with Mar\'\i a Ronco.

\begin{theorem} \cite{LodayRonco02} The sum of the trees $t$ and $s$ is the union of all the trees which fit in between $t/s$ and  $t\backslash s$ :
$$t + s = \bigcup_{t/s \leq r \leq t\backslash s} r$$
where $t/s$ is obtained by grafting the root of $t$ on the leftmost leaf of $s$ and $t\backslash s$ is obtained by grafting the root of $s$ on the rightmost leaf of $t$.
\end{theorem}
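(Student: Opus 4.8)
The plan is to prove the set equality $t+s=[t/s,\,t\backslash s]$ by induction on the total number of internal vertices of $t$ and $s$, matching the recursive definition of the sum against a recursive description of the two grafting trees. Writing $s=s^l\vee s^r$ and $t=t^l\vee t^r$, the over and under operations satisfy $t/s=(t/s^l)\vee s^r$ (with $t/\vert=t$) and $t\backslash s=t^l\vee(t^r\backslash s)$ (with $\vert\backslash s=s$), since grafting the root of $t$ on the leftmost leaf of $s$ only disturbs $s^l$, and dually on the right. Together with the defining formulas $t\r s=(t+s^l)\vee s^r$ and $t\l s=t^l\vee(t^r+s)$, the induction hypotheses $t+s^l=[t/s^l,\,t\backslash s^l]$ and $t^r+s=[t^r/s,\,t^r\backslash s]$ plug directly into the two halves of $t+s=t\l s\cup t\r s$.

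The first tool I would set up is an \emph{interval-image lemma}: for a fixed tree $u$, the grafting maps $a\mapsto a\vee u$ and $b\mapsto u\vee b$ carry Tamari intervals to Tamari intervals, so that $[x,y]\vee u=[x\vee u,\,y\vee u]$ and $u\vee[x,y]=[u\vee x,\,u\vee y]$. The key observation is that the number of internal vertices of the right subtree is non-decreasing along every covering chain (a rotation inside either subtree preserves it, while a rotation at the root strictly increases it); hence along a saturated chain from $x\vee u$ to $y\vee u$ this quantity is pinned to $|u|$, no root rotation can occur, and the right subtrees form a non-decreasing chain from $u$ back to $u$, forcing every tree on the chain to have right subtree exactly $u$. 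The symmetric statement for the left subtree handles $u\vee[x,y]$. Granting this, the induction hypotheses give $t\r s=[t/s^l,\,t\backslash s^l]\vee s^r=[t/s,\,M_1]$ with $M_1:=(t\backslash s^l)\vee s^r$, and $t\l s=t^l\vee[t^r/s,\,t^r\backslash s]=[M_2,\,t\backslash s]$ with $M_2:=t^l\vee(t^r/s)$; in particular $t/s$ and $t\backslash s$ already appear as the extreme endpoints.

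Two further points are cheap. First, the halves are disjoint: every tree of $t\l s$ has left subtree $t^l$, carrying $|t^l|$ internal vertices on the left, whereas every tree of $t\r s$ has left subtree in $t+s^l$, carrying $|t|+|s^l|>|t^l|$ internal vertices; the counts differ, so the sets cannot meet, and $t+s$ is the disjoint union of $[t/s,M_1]$ and $[M_2,t\backslash s]$. Second, the inclusion $t+s\subseteq[t/s,t\backslash s]$ reduces, via monotonicity of grafting, to the two cross-inequalities $M_1\le t\backslash s$ and $t/s\le M_2$, which I would settle by the same induction.

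The remaining and hardest step is the reverse inclusion, equivalently the \emph{gluing}: that the two disjoint intervals $[t/s,M_1]$ and $[M_2,t\backslash s]$ fit together into the single interval $[t/s,t\backslash s]$ with no tree left uncovered. One cannot hope for the naive picture in which $M_1$ is covered by $M_2$: as soon as $s^l$ and $t^r$ are both nontrivial this fails, e.g.\ for $t=\arbreBA$ and $s=\arbreAB$ one computes $M_1=\big(a(b(cd))\big)\vee\vert$ and $M_2=\vert\vee\big(((ab)c)d\big)$, which are structurally far apart, so the interval must weave between them. The correct statement is that $[t/s,M_1]$ is a down-set and $[M_2,t\backslash s]$ an up-set of $[t/s,t\backslash s]$ whose union is everything, and I expect this to be the crux. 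I would attack it either by a simultaneous induction showing that each $r\in[t/s,t\backslash s]$ satisfies exactly one of $r\le M_1$ or $r\ge M_2$, or by a counting shortcut: the recursion $|t+s|=|t+s^l|+|t^r+s|$ is Pascal's rule, so $|t+s|=\binom{p+q}{p}$ where $p$ and $q$ are the numbers of internal nodes on the rightmost branch of $t$ and the leftmost branch of $s$; an independent count of $|[t/s,t\backslash s]|$ by the same binomial, combined with the inclusion already proved, would force equality.
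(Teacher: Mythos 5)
Your scaffolding is correct, and the parts you actually prove are proved soundly: the recursions $t/s=(t/s^{l})\vee s^{r}$ and $t\backslash s=t^{l}\vee(t^{r}\backslash s)$, the interval-image lemma (your monotonicity argument --- right-subtree size is non-decreasing along covering chains, so a saturated chain between $x\vee u$ and $y\vee u$ admits no root rotation and then no rotation inside the right subtree --- is a genuinely nice proof), the identifications $t\vdash s=[t/s,\,M_1]$ and $t\dashv s=[M_2,\,t\backslash s]$ from the induction hypotheses, the disjointness of the two halves, and the reduction of the inclusion $t+s\subseteq[t/s,\,t\backslash s]$ to the cross-inequalities $M_1\le t\backslash s$ and $t/s\le M_2$, which do follow from an easy auxiliary induction using the root rotation $(a\vee b)\vee c\le a\vee(b\vee c)$ and monotonicity of grafting.

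The genuine gap is the reverse inclusion $[t/s,\,t\backslash s]\subseteq t+s$, which you leave as two candidate strategies rather than an argument --- and this is not a peripheral step, it is the entire content of the theorem. Strategy (a) is the statement to be proved, restated: for an arbitrary $r$ in the interval your chain invariants only yield size bounds such as $|t^{l}|\le|r^{l}|\le|t|+|s^{l}|$, and even in the boundary case $|r^{l}|=|t^{l}|$ the chain from $r$ up to $t\backslash s$ only gives $r^{l}\le t^{l}$, whereas membership in $t\dashv s$ requires $r^{l}=t^{l}$ exactly; extracting that from $t/s\le r$ is blocked because root rotations along an ascending chain replace the left subtree by its own left subtree in a way your invariants do not track. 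Closing this seems to require a sharper global characterization of the Tamari order (e.g.\ by bracketing vectors or inversion-type sets), not just monotonicity of subtree sizes. Strategy (b) is circular as stated: the only natural way to count $[t/s,\,t\backslash s]$ and verify Pascal's recursion for it is precisely the decomposition of the interval into the two sub-intervals you are trying to establish. For comparison, the paper itself offers no proof (it cites Loday--Ronco), and the proof there sidesteps the gluing problem entirely: it works in the weak order on the symmetric group, where the shifted shuffles of two permutations form an interval for essentially trivial reasons (inversion sets within the two blocks are frozen, inversions between blocks are free), and then transports the statement to trees along the surjection $S_n\twoheadrightarrow PBT_{n+1}$, which is compatible with products and intervals. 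So: correct framework and several well-executed lemmas, but the crux is missing, and your present toolkit is unlikely to supply it without importing that extra structure.
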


This formula makes sense because one can prove (cf.\ loc.\ cit.) that, for any trees $t$ and $s$,  we have $t/s \leq t\backslash s$.

 \section{Multiplication of trees}
 
 Multiplication of natural numbers is obtained from addition since:
 $$n \times m := \underbrace{m+ \cdots + m}_{n \textrm{ copies}} .$$
In other words, one writes $n$ in terms of the generator $1$:
  $$n  = \underbrace{1+ \cdots + 1}_{n \textrm{ copies}} ,$$
and then one replaces $1$ by $m$ everywhere to obtain $n�\times m$.
  
  The very same process enables us to define the product $t\times s$ of the trees $t$ and $s$. First we write $t$ in terms of $1$ with the help of the left and right operations, and then we replace each occurence of $1$ by $s$. Here are some examples:
  
  $$\arbreAB \times \arbreBA= \arbreBADA$$
  $$\arbreAB \times \arbreAB= \arbreACAD \cup \arbreABCD.$$
  
  The proof of the first case is as follows. Since we have  $\arbreAB= 1 \r 1$, we can write:
  $$\arbreAB \times \arbreBA=\arbreBA \r \arbreBA = \arbreBA \vee \arbreA = \arbreBADA .$$
 It is immediate to check that if $s$ has $n$ internal vertices and $t$ has $m$ internal vertices, then $s \times t$ has $nm$ internal vertices. Another relationship with the product of natural numbers is the following.  Replacing $n$ by the union of trees of $PBT_{n+1}$, and $m$ by the union of trees of $PBT_{m+1}$, then $n\times m$ is actually the union of all the trees with $nm$ internal vertices.
  
  Some of the properties of the multiplication are preserved, but not all. The associativity holds and the distributivity with respect to the left factor also holds. But right distributivity does not (and of course commutativity does not hold). This is the price to pay for such a generalization. More properties and computation can be found in  \cite{Loday02}. The interesting paper \cite{BrunoYazaki08} deals with the study of prime numbers (we should say prime trees) in this framework.  
  
  Let us summarize the properties of the sum and the product of trees versus the sum and the product of integers. We let $\PP(PBT)$ be the set of non-empty subsets of $PBT_{n}$ for all $n$. 
  
  \begin{proposition} There are maps
  $$\NN \mono \PP(PBT) \epi \NN$$
  which are compatible with the sum and the product. The composite is the identity.
  \end{proposition}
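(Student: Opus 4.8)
The plan is to define the two maps explicitly and then verify compatibility by reducing everything to the constructions already established in the paper. The first map $\NN \mono \PP(PBT)$ sends an integer $n$ to the full set $PBT_{n+1}$, viewed as a (non-empty) subset of the disjoint union of the $PBT_k$; this is precisely the decomposition of $n$ into its components described in Section \ref{trees}. The second map $\PP(PBT) \epi \NN$ sends a non-empty subset $S \subseteq PBT_k$ to the integer $k-1$, i.e.\ to the common number of internal vertices of its trees. Both maps are well defined: every element of $\PP(PBT)$ lives inside a single $PBT_k$ by hypothesis, and $PBT_{n+1}$ is always non-empty since $c_n \geq 1$.

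First I would check that the composite $\NN \to \PP(PBT) \to \NN$ is the identity. This is immediate from the definitions: $n$ maps to $PBT_{n+1}$, whose trees have $n$ internal vertices, so the second map returns $(n+1)-1 = n$. Injectivity of the first map and surjectivity of the second then follow formally, since a retraction exists.

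Next I would verify compatibility with the sum. The content here is exactly the identity displayed in Section \ref{trees}, namely
$$m+n = \bigcup_{r \in PBT_{m+n+1}} r,$$
which shows that the union over components of $m$ added to the components of $n$ reproduces all trees with $m+n$ internal vertices. Tracking this through the maps, $m$ and $n$ go to $PBT_{m+1}$ and $PBT_{n+1}$, their sum in $\PP(PBT)$ is $PBT_{m+n+1}$, and the second map returns $m+n$; going the other way around the square is trivial. Compatibility with the product is argued the same way, using the fact recorded after $(\ddag)$ that the product of a tree with $m$ internal vertices and a tree with $n$ internal vertices has $mn$ internal vertices, together with the observation that $n \times m$, computed on components, yields the union of all trees with $nm$ internal vertices.

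The main obstacle is not any single hard computation but rather making precise what ``the sum and the product on $\PP(PBT)$'' mean and checking they are well defined on subsets sitting in a fixed $PBT_k$. One must confirm that the operations $+$ and $\times$ extended to subsets (via unions of $s\,\l\,t$ and $s\,\r\,t$, and via the substitution procedure for $\times$) always land in a single graded piece, so that the target integer is unambiguous; this is where the grading by number of internal vertices does the real work, and it is what guarantees the second map is a genuine homomorphism rather than merely a set map.
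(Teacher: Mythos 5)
Your proposal is correct and follows essentially the same route as the paper, which simply identifies the two maps ($n \mapsto$ the full set of trees with $n$ internal vertices, and a homogeneous subset $\mapsto$ the common arity of its trees) and lets the earlier computations of $m+n$ and $m\times n$ on components supply the compatibility. Your added remarks on well-definedness (the operations on subsets landing in a single graded piece) and on deducing injectivity/surjectivity from the retraction are exactly the details the paper leaves implicit; note only that the paper's own wording has a harmless off-by-one ($PBT_n$ versus your $PBT_{n+1}$), and your indexing is the one consistent with Section \ref{trees}.
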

  Indeed, the first map sends $n$ to the union of all the trees in $PBT_{n}$. The second map sends a subset to the arity of its components.

  \section{Trees and polynomials} The algebra of polynomials (let us say with real coefficients) in one variable $x$  admits the monomials $x^{n}$ for basis. Since we know how to decompose an integer into the union of trees, we dare to write
$$x^{n}= \sum_{t\in PBT_{n+1}} x^{t}.$$

More specifically,
% abruptly ???
we consider the vector space spanned by the elements $x^{t}$ for any tree $t$. 
As usual, the sum of exponents gives rise to a product of factors:
$$x^{n+m}= x^{n}x^{m}\quad , \quad x^{s+t}= x^{s}x^{t},$$
where $s$ and $t$ are trees. We use the notation   $x^{\vert}= x^{0}= 1$ and 
$x^{\!\!\petitarbreA}=x^{1}=x$.

In fact there is no reason to consider only polynomials and one can as well consider series since the sum and the product are well-defined.

In this framework  the operations 
\begin{eqnarray*}
% \left. 
 \begin{array}{r} \mbox{union} \\ \mbox{addition} \\ \mbox{multiplication} \end{array}
% \right \}
 \quad \mbox{on trees} \qquad \mbox{become} \qquad 
%  \left\{ 
 \begin{array}{r} \mbox{addition} \\ \mbox{multiplication} \\ \mbox{composition} \end{array} 
% \right.
  \quad \mbox{on polynomials} \, .
\end{eqnarray*}
What about the operations $\l$ and $\r$ ? They give rise to two operations denoted $\g$ and $\d$ respectively, on polynomials. These two operations are bilinear and satisfy the relations:

\begin{displaymath}
%\left\{ 
\begin{array}{rcl}
(r \g s) \g t &=& r \g (s \g t + s\d t) \, , \\
(r \d s) \g t &=& r \d (s \g t) \, , \\
(r \g s+ r \d s) \d t &=& r \d (s \d t) \, . \\
\end{array}
%\right.
\end{displaymath}

A vector space  $A$ endowed with two bilinear operations  $\g$ and $\d: A\t A \to A$, satisfying the relations just mentioned, is called a \emph{dendriform algebra}, cf.\ \cite{Loday95, Loday01}.  

The dendriform algebras show up in many topics in mathematics: higher algebra \cite{BurgunderRonco10, Dotsenko09, Ronco00}, homological algebra \cite{Loday12}, combinatorial algebra \cite{AvalViennot10, AguiarSottile06, Devadoss09, NovelliThibon07, PilaudSantos09, Postnikov09}, algebraic topology \cite{Chapoton02, Vallette08, Yau07}, renormalization theory \cite{Brouder04, BrouderFrabetti03}, quantum theory \cite{GGL}, to name a few. It is closely related to the notion of shuffles. In fact it could be called the theory of ``non-commutative shuffles''.

\section{Realizing of the associahedron}\label{realization}

In Dov Tamari's seminal work ``Monoides pr\' eordonn\' es et cha\^ {\i}nes de Malcev'' \cite{Tamari-these}, which is his French doctoral thesis defended in 1951, the picture displayed in 
Fig.~\ref{fig:Tthesis} appears on page 12.
%\medskip
\begin{figure}[h]
\begin{center}
%{\centerline {
\scalebox{0.42}{\includegraphics{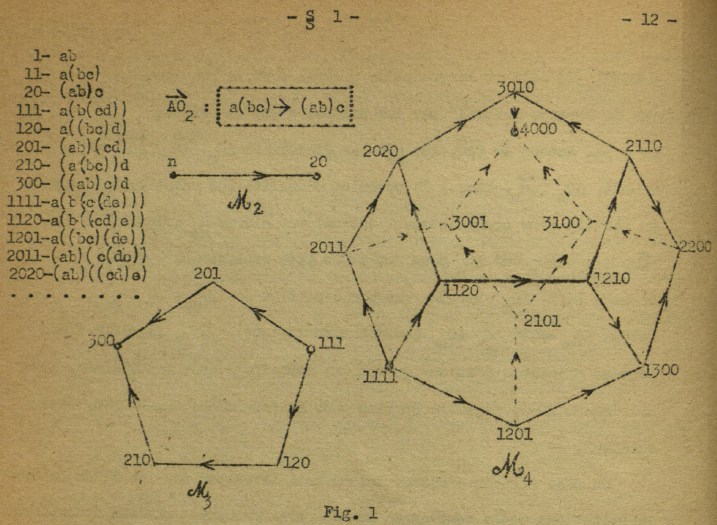}}
%}}
\parbox{5.5cm}{
\caption{Excerpt from Tamari's thesis. \label{fig:Tthesis}}
}
\end{center}
\end{figure}
%\vskip0.5cm
Unfortunately this part has not been reproduced in the published text \cite{Tamari51} and therefore has been forgotten for all these years. It is very interesting on three grounds. First, it is the first appearance of the \emph{Tamari poset}. Second, the Tamari poset is portrayed in dimension 2 and 3 as a polygon and a polyhedron respectively. Third, the parenthesizings has been replaced by a code that one can consider as coordinates in the euclidean space. We now analyze these three points.

\subsection{Tamari poset} The Tamari poset, appearing often as the \emph{Tamari lattice} in the literature (since it is a lattice), proved to be helpful in many places in mathematics. I mentioned earlier in this text its relevance with dendriform structures. It is playing a key role in the problem of endowing the tensor product of $\Ai$-algebras with an $\Ai$-algebra structure, cf.\ \cite{Loday12} for two reasons. First, the Tamari poset gives rise to a cell complex called the associahedron or the Stasheff polytope (see below). In 1963 Jim Stasheff showed that it encodes the notion of ``associative algebra up to homotopy'', now called $\Ai$-algebras.  Let us recall that such an algebra $A$ is equipped with a $k$-ary operation $m_{k}: A^{\t k} \to A, k\geq 2,$ which satisfy some universal relations describing the topological structure of the associahedron. The second reason comes as follows. For a fixed integer $n$, the associahedron $\KKK^{n}$ is a cell complex of dimension $n$. We can prove that its cochain complex $C^{\bullet}(\KKK^{n})$ can be endowed with a structure of $\Ai$-algebra. The operations $m_{k}$ can be made explicit in terms of the Tamari poset relation,  cf.\ \cite{Loday12}.

\subsection{Associahedron and regular pentagons} The sentence following Fig.~\ref{fig:Tthesis} in Tamari's 
thesis is the following

``G\' en\' eralement, on aura des hyperpoly\`edres.''

But no further information is given. In fact, as we know now, we can realize the Tamari poset as a convex polytope so that each element of the poset is a vertex and each covering relation is an edge (see below). There is no harm in taking the regular pentagon in dimension 2. However, in contrast to what Fig.~\ref{fig:Tthesis} suggests, one cannot realize the associahedron in dimension 3 with regular pentagons. What happens is the following: the four vertices corresponding to the parenthesizings $2020, 2011, 1120$ and $1111$ do not lie in a common plane. It is a good trigonometric exercise for first year undergraduate students. If we take the convex hull of $\mathcal{M}_{4}$ 
of Fig.~\ref{fig:Tthesis} (that is keeping regular pentagons), then the faces are made up of 6 pentagons, and 6 triangles instead of the 3 quadrangles. There are 3 edges which show up and which do not correspond to any covering relation:

$$2011 - 1120, \quad 30012 - 3100, \quad 2200 -1210.$$

\subsection{Realizations of the associahedron} Though Tamari does not mention it, we can think of his clever way of indexing the parenthesizings  as coordinates of points in the euclidean space $\RRR^{n+1}$. Let us recall briefly his method: given a parenthesizing (which is equivalent to a planar binary tree $t$) of the word $x_{0}x_{1}\ldots x_{n+1}$ we count the number of opening parentheses in front of $x_{0}$, then $x_{1}$, etc., up to $x_{n}$. For instance the word $((x_{0}x_{1}) x_{2})$ gives $2\ 0$ and the word $(x_{0}(x_{1} x_{2}))$ gives $1\ 1$. Let us denote this sequence of numbers by 
$$M(t)=(\aa_{0}, \ldots, \aa_{n})\in \RRR^{n+1}.$$
Since the number of parentheses depends only on the length of the word, we have $\sum\aa_{i}= n+1$ and the points $M(t)$  lie in a common hyperplane.  What does the convex hull look like ? In dimension 2 we get the following pentagon:

$$\TamariDeux$$

As we see it is a quadrangle (that is a deformed square) with one point added on an edge.

In dimension 3 we get:

$$\TamariTrois$$

\noindent that is a deformed cube on which the associahedron has been drawn. In order to analyze the $n$-dimensional case, let us introduce the following notation. The convex hull of the points $M(t)$ is called the \emph{Tamari polytope}. The \emph{canopy} of the tree $t$ is an element of the set $\{\pm \}^{n}$ corresponding to the orientation of the interior leaves. If the leaf points to the left (resp.\ right), then we take $-$, resp. $+$. Of course we discard the two extremal leaves, whose orientation is fixed. We denote by
$$\psi : PBT_{n+2} \to \{\pm \}^{n}$$
this map. Among the trees with a given canopy, we single out the tree which is constructed as follows. We first draw the outer part of the tree.  Then for each occurence of $-$ we draw an edge which goes all the way to the right side of the tree (it is a left leaf). Then we complete the tree by drawing the right leaves. For instance:
$$\sigma(-) = \arbreBA, \quad \sigma(-,+) = \arbreCAB, \quad \sigma(-,-) = \arbreCBA .$$
This construction gives a section to $\psi$ that we denote by
$$\sigma : \{\pm \}^{n} \to PBT_{n+2} .$$

\begin{proposition} The Tamari polytope $KT^{n}$ is a hypercube shaped polytope, with extremal points $M(\sigma(\aa))$, for $\aa\in 
\{\pm \}^{n}$. For any tree $t$ the point $M(t)$ lies on a face of this hypercube containing $M(\sigma\psi(t))$.
\end{proposition}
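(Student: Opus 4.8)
The plan is to exhibit $KT^n$ as the solution set of an explicit linear system and to read its combinatorial type off the facets. Write $M(t)=(a_0,\dots,a_n)$ and $s_j:=a_0+\cdots+a_{j-1}$ for the partial sums, so that $s_{n+1}=n+1$. Reading the opening parentheses shows that $a_i$ is the number of internal vertices of $t$ whose leftmost leaf is the $i$-th one; two consequences are immediate. First, an interior leaf is the leftmost leaf of the vertex just above it precisely when it is a \emph{left} leaf, so the canopy is recovered from the coordinates by $\psi(t)_i=-\iff a_i>0$ and $\psi(t)_i=+\iff a_i=0$ (and $a_0\ge 1$ always). Second, a grafting induction on $t=t^l\vee t^r$ --- the new root raises every partial sum coming from $t^l$ by one --- yields the ballot inequalities $s_j\ge j$ for $1\le j\le n$. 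Hence every $M(t)$ lies in
\[
P:=\Big\{(a_0,\dots,a_n):a_i\ge0,\ \textstyle\sum_i a_i=n+1,\ s_j\ge j\ (1\le j\le n)\Big\}
\]
inside the hyperplane $\sum_i a_i=n+1$, so $KT^n\subseteq P$.

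Next I would analyse the facets of $P$. It is $n$-dimensional (e.g.\ the left comb, together with, for each $p$, a tree having $a_0=p$ and $a_p=n+1-p$, give $n+1$ affinely independent coordinate vectors). Its $2n$ facets are the lower facets $L_j=\{s_j=j\}$ and the vanishing facets $Z_i=\{a_i=0\}$, for $1\le j,i\le n$. The decisive local fact is that $L_i$ and $Z_i$ are \emph{disjoint}: on $Z_i$ one has $s_{i+1}=s_i$, which with $s_{i+1}\ge i+1$ forces $s_i\ge i+1$, excluding $s_i=i$. Since these are the only facets and no point meets both members of a pair, each vertex meets at most one facet per index and hence at most $n$ facets; being a vertex it meets exactly $n$, one from each pair $\{L_i,Z_i\}$. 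This attaches to every vertex a sign vector $\aa\in\{\pm\}^n$ ($+$ where $Z_i$ is tight, $-$ where $L_i$ is tight); solving the resulting tight system gives the single point supported on the positions $0=p_0<p_1<\cdots<p_k$ of the $-$'s, with $a_{p_r}=p_{r+1}-p_r$ (where $p_{k+1}=n+1$). Comparing with the construction of $\sigma$ identifies this point as $M(\sigma(\aa))$. Thus the vertices of $P$ are exactly the $2^n$ section points; in particular each is one of the $M(t)$, so $P=\mathrm{conv}\{M(\sigma(\aa))\}\subseteq KT^n$, whence $KT^n=P$.

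To promote ``same vertices and facets'' to a genuine combinatorial cube, and to obtain the last assertion at the same time, I would write down the whole face lattice. For $c\in\{L,Z,*\}^n$ put $F_c=P\cap\bigcap_{c_i=Z}Z_i\cap\bigcap_{c_i=L}L_i$; each $F_c$ is a face, and using $L_i\cap Z_i=\emptyset$ one checks that the section vertices it contains are exactly those whose sign vector is compatible with $c$ ($\aa_i=+$ where $c_i=Z$, $\aa_i=-$ where $c_i=L$, free where $c_i=*$). Hence $c\mapsto F_c$ is an inclusion-preserving bijection onto the faces of $P$, matching the face lattice of $[0,1]^n$: $P$ is hypercube shaped. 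Finally, for an arbitrary tree $t$ with $\aa=\psi(t)$, both $M(t)$ and $M(\sigma(\aa))$ have $a_i=0$ exactly when $\psi(t)_i=+$, so both lie on the face $\{a\in P:a_i=0\ \text{whenever }\psi(t)_i=+\}$, which is the face required by the statement.

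The main obstacle is the cube claim proper: exhibiting $2^n$ vertices is not enough (a polytope with $2^n$ vertices need not be a cube), so one must control the entire face lattice, and it is the disjointness $L_i\cap Z_i=\emptyset$ that makes the facets pair up as in a cube. The one real computation underneath everything is the ballot inequality $s_j\ge j$ --- equivalently, that the open-parenthesis vectors are precisely the lattice points of $P$ --- which the grafting induction settles but which must be set up with care at the trivial factors $t^l=\vert$ or $t^r=\vert$.
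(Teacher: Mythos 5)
Your proof is correct, and it takes a genuinely different route from the paper's. The paper disposes of the proposition in two lines: it asserts that ``it is easily seen by induction'' that the convex hull of the points $M(\sigma(\aa))$ is a combinatorial hypercube, and then identifies the result, up to a change of orientation, with the cubical version of the associahedron already constructed elsewhere (Arithmetree, Section 2.5; also Saneblidze--Umble); the second assertion of the proposition is not argued separately at all. You instead produce an explicit $H$-representation: $KT^n$ is cut out, inside the hyperplane $\sum_i a_i=n+1$, by the nonnegativity constraints $a_i\ge 0$ and the ballot inequalities $s_j\ge j$, and the cube structure is then forced by the pairing of facets $\{L_i,Z_i\}$ together with the disjointness $L_i\cap Z_i=\emptyset$ on the polytope. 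Your key combinatorial inputs check out: $a_i$ is indeed the number of internal vertices whose leftmost descendant leaf is leaf $i$, so the canopy dictionary $\psi(t)_i=+\iff a_i=0$ is right (it matches the paper's examples, e.g.\ $M(\arbreBA)=(1,1)$ with canopy $-$ and $M(\arbreAB)=(2,0)$ with canopy $+$), the ballot inequalities follow from the grafting induction (or from the observation that the least common ancestors of consecutive leaves $i,i+1$ with $i<j$ give $j$ distinct vertices counted by $s_j$), and solving the tight system for a sign vector reproduces exactly the coordinates $a_{p_r}=p_{r+1}-p_r$ of $M(\sigma(\aa))$. You are also right that the genuine content is the face-lattice analysis, not the vertex count, and your treatment of it is sound since every proper face is an intersection of facets, all of which lie among the $L_i,Z_i$. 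What your approach buys is a self-contained proof plus extra information the paper never states (the facet inequalities of the Tamari polytope, and a proof of the second assertion via the dictionary $a_i=0\iff +$); what the paper's approach buys is brevity and the identification of $KT^n$ with a known object, the cubical decomposition of the associahedron, which situates the proposition in prior work rather than re-deriving it.
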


\begin{proof} It is easily seen by induction that the convex hull of the points $M(\sigma(\aa))$ form a (combinatorial) hypercube.

Up to a change of orientation, this is the cubical version of the associahedron described in \cite{Loday02} section 2.5 (see also \cite{Loday11} Appendix 1). It is also described in \cite{SaneblidzeUmble04}.
\end{proof}

The Tamari polytope shares the following property with the standard permutohedron: all the edges have the same length.

Though Tamari himself does not consider this construction in his thesis, a close collaborator, Mrs de Foug\`eres, worked out some variations in \cite{Fougeres64}.

In 1963 Jim Stasheff \cite{Stasheff63} discovered independently the associahedron, first as a contractible cell complex, in his work on the structure of the loop spaces. It was later recognized to be realizable as a convex polytope, see for instance \cite{Stasheff97} Appendix B. In 2004 I gave in \cite{Loday04} an easy construction with integral coordinates as follows. It is usually described in terms of trees, but I will translate it in terms of parenthesized words. 

Given a parenthesized word of length $n$, for instance $((x_{0} x_{1})(x_{2}x_{3}))$, we associate to it a point in the euclidean space with coordinates computed as follows. The $i$th coordinate ($i$ ranging from $0$ to $n$) is the product of two numbers $a_{i}$ and $b_{i}$. We consider the smallest subword which contains both $x_{i}$ and $x_{i+1}$. Then $a_{i}$ is the number of opening parentheses standing to the left of $x_{i}$ and  $b_{i}$ is the number of closing parentheses standing to the right of $x_{i+1}$ in the subword. In the example at hand we get $1\ 4 \ 1$. It gives rise to the following polytopes in low dimension:

$$ \KdeuxA\hskip3cm \KtroisA\qquad$$

$$ \KKK^2\hskip5cm \KKK^3\qquad $$

Since then several interesting variations for the associahedron itself and for other families of polytopes have been given along the same lines, see for instance \cite{HohlwegLange07, Devadoss09, Forcey08a, Forcey08b, Postnikov09, PilaudSantos09}.

\section{Associahedron and the trefoil knot} Let us end this paper with a surprizing relationship which is not so-well-known. If we draw a path on the 3-dimensional associahedron from the center of each quadrangle to the center of the other quadrangles via the center of the pentagons, alternating over and under as we reenter a quadrangle, then we get the trefoil knot in Fig.~\ref{fig:trefoilknot}. 
%(see picture: Trefoil knot).
%\vspace{-.5cm}

\begin{figure}[h]
\begin{center}
\includegraphics[scale=0.4]{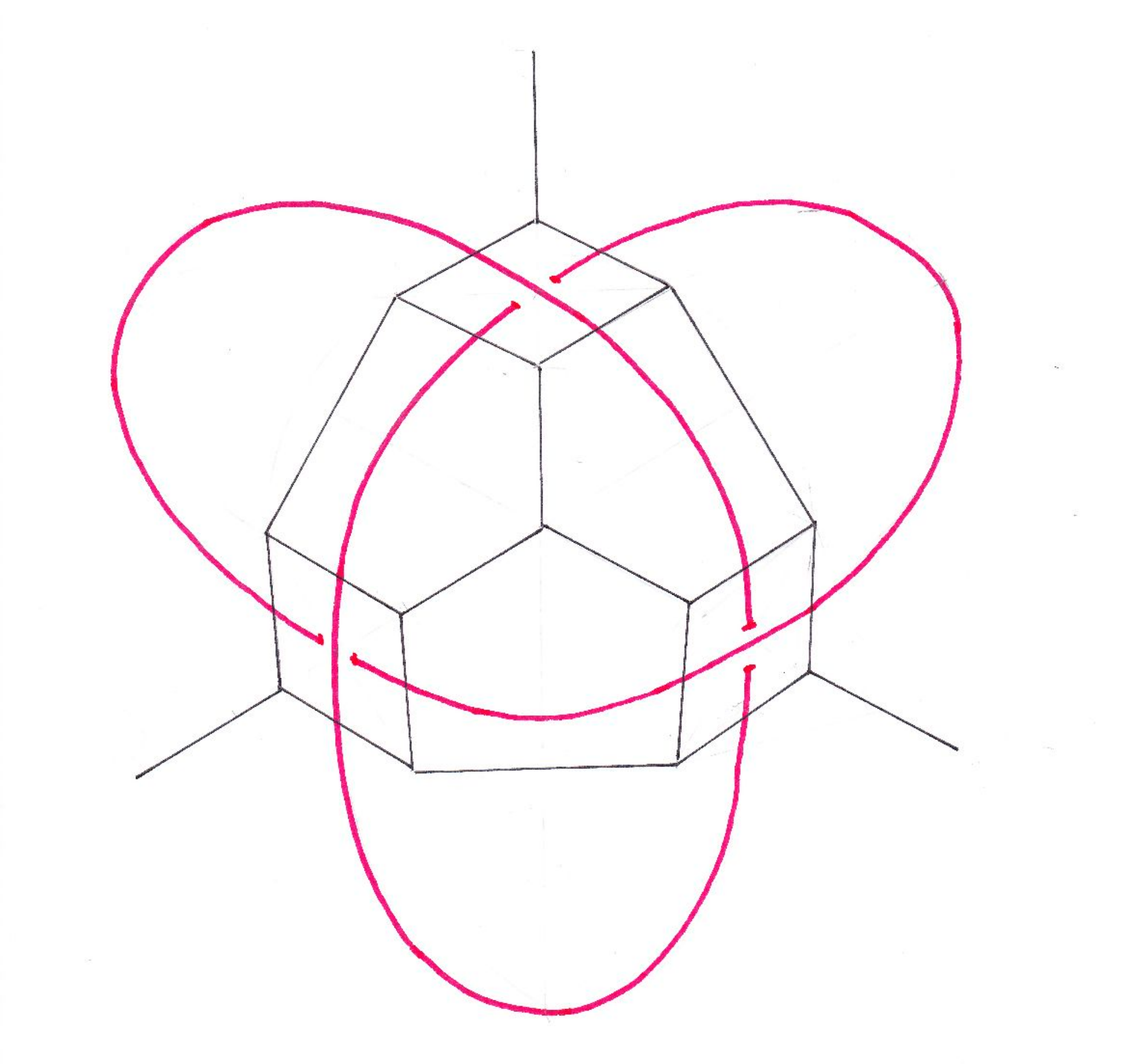}
\parbox{5.5cm}{
\caption{Trefoil knot \label{fig:trefoilknot}}
}
\end{center}
\end{figure}
%\vspace{-2cm}

The same process applied to the 3-dimensional cube gives rise to the Borromean rings. In the cube case we know how to relate the various invariants of this link: Philip Hall identity, triple Massey product. Nothing similar is known in the associahedron case so far.

\end{document}